\documentclass[]{amsart}

\usepackage[margin=1in]{geometry}
\usepackage{amsmath,amssymb,amsthm,mathtools}
\usepackage{enumitem}
\usepackage{hyperref}
\hypersetup{
  colorlinks=true,
  linkcolor=blue,
  citecolor=blue,
  urlcolor=blue
}

\newtheorem{thm}{Theorem}[section]

\newtheorem*{fixed point criterion}{\fixed point criterion}
\newtheorem{cor}[thm]{Corollary}
\newtheorem{lem}[thm]{Lemma}
\newtheorem{prop}[thm]{Proposition}

\theoremstyle{definition}
\newtheorem{defn}[thm]{Definition}

\theoremstyle{remark}
\newtheorem{rem}[thm]{Remark}
\numberwithin{equation}{section}

\newcommand{\SL}{\operatorname{SL}}

\newcommand{\Z}{\mathbb Z}

\newcommand{\rank}{\operatorname{rank}}
\newcommand{\Stab}{\operatorname{Stab}}

\newcommand{\rk}{\mathrm{rk}}

\title{ $\SL(3,\Z)$ is not Howson}

\thanks{The authors are partially supported by NSFC (No. 12471066 and 11971389).}

\subjclass[2010]{20F65, 20F34.}

\keywords{Howson group, strongly Howson, intersection of subgroups, rank, $\SL(3,\Z)$.}

\date{\today}

\author{Shengkui Ye}
\address{NYU Shanghai, No.567 Yangsi West 
  Rd, Pudong New Area, Shanghai, 200124, P.R. China \\
NYU-ECNU Institute of Mathematical Sciences at NYU Shanghai, 3663 Zhongshan Road North, Shanghai, 200062, China}
\email{sy55@nyu.edu}

\author{Qiang Zhang}
\address{School of Mathematics and Statistics, Xi’an Jiaotong University, Xi’an 710049, China}
\email{zhangq.math@mail.xjtu.edu.cn}

\begin{document}

\maketitle

\begin{abstract}
We give an explicit construction of two $2$-generated subgroups
$H,K\leq \SL(3,\Z)$ whose intersection is not finitely generated. The
construction takes place inside the standard parabolic subgroup
$\Z^2\rtimes \SL(2,\Z)\leq \SL(3,\Z)$. The main point is to identify
$H\cap K$ with the stabilizer of a point for an affine action of a free group
on $\Z^2$, and then to prove, using the Schreier graph of this action, that
this stabilizer is not finitely generated. Furthermore, we prove that there exists a sequence of subgroups $H_q, K_q \leq \SL(3,\mathbb{Z})$ such that $\rank(H_q)=\rank(K_q)=4$, and
\[
\rank(H_q\cap K_q)\geq q+1,
\] while $H_q\cap K_q$ is finitely generated.
\end{abstract}

\section{Introduction}
Let $G$ be a group.  The \emph{rank} of $G$, denoted
$\rank(G)$, is the minimal cardinality of a generating set for $G$.

In 1954,  Howson \cite{How54} showed that the intersection of finitely generated subgroups of a free group is also finitely generated, which led to the notion of Howson group: a group $G$ is said to be \textit{Howson}, if the intersection $H\cap K$ of any two finitely generated subgroups $H, K<G$ is again finitely generated. Many types of groups are Howson, for instance, free groups, surface groups, Baumslag-Solitar groups $BS_{1,n}=\langle a,t \mid tat^{-1}=a^n \rangle  (n\geq 1)$ \cite{Mol68}, limit groups  \cite{Dah03}, etc. In particular, if $G$ is a free or surface group, then
$$\rank(H\cap K)-1\leq (\rank(H)-1)(\rank(K)-1),$$
which was conjectured by Hanna Neumann in 1957, and proved independently by Friedman \cite{Fri14} and by Mineyev \cite{Min12} for free groups in 2011, and by Antol\'{i}n and Jaikin-Zapirain \cite{AJZ22} for surface groups in 2022. Note that the direct product $F_n \times \Z$ of a free group $F_n$ with rank $n>1$ is not Howson. Moreover, Kapovich \cite{Kap97} showed many hyperbolic groups are not Howson.

It is easy to show that the class of Howson groups is closed under taking subgroups and under ﬁnite extensions. In particular, virtually free groups and virtually surface groups are both Howson. More generally, the class of Howson groups is closed under graphs of groups, where the edge groups are ﬁnite  \cite{Sky05}. 
Moreover, Shusterman and Zalesskii \cite{SZ20} extended the Howson property to the Demushkin group: the intersection of a pair of closed topologically finitely generated subgroups of a Demushkin group is topologically finitely generated. This gives the first example of the Howson pro-$p$ groups that are not free.


It is known that the special linear group $\SL(2,\mathbb{Z})$ is Howson because it is virtually free. In contrast, $\SL(n,\Z) (n\geq 4)$ is not Howson \cite{LR11}, since it contains $F_2 \times \mathbb{Z}$. Nevertheless, it still remains unclear whether $\SL(3,\Z)$ is Howson (see \cite{LR11} and \cite{ka}). In this note we prove that $\SL(3,\Z)$ is not Howson: the intersection of two
$2$-generated subgroups can fail to be finitely generated.

\begin{thm}\label{thm sl3n not howson}
The group $\SL(3,\Z)$ is not Howson.
\end{thm}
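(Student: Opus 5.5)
We will work inside the parabolic subgroup $P=\Z^2\rtimes\SL(2,\Z)\leq\SL(3,\Z)$, realized as the matrices $\begin{pmatrix} g & v\\ 0 & 1\end{pmatrix}$ and written as pairs $(v,g)$. Let $F=\langle a,b\rangle\leq\SL(2,\Z)$ with $a=\begin{pmatrix}1&2\\0&1\end{pmatrix}$ and $b=\begin{pmatrix}1&0\\2&1\end{pmatrix}$. By Sanov's theorem, $F$ is free of rank $2$. Set $H=\langle (0,a),(0,b)\rangle\cong F$.

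For the second subgroup, fix vectors $u,w\in\Z^2$. Since $F$ is free, the assignment $a\mapsto (u,a)$, $b\mapsto(w,b)$ extends to a homomorphism $F\to P$ of the form $g\mapsto (c(g),g)$. Here $c:F\to\Z^2$ is a $1$-cocycle, $c(gh)=c(g)+g\,c(h)$. Equivalently, $F$ acts affinely on $\Z^2$ by $g\cdot x=gx+c(g)$. Let $K$ be the image, which is $2$-generated. An element of $H$ has the form $(0,g)$ and an element of $K$ has the form $(c(g),g)$, and the linear part $g$ determines the element in each case. Hence
\[
H\cap K\cong\{g\in F: c(g)=0\}=\Stab_F(0)
\]
for the affine action. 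The theorem therefore reduces to choosing $u,w$ so that the stabilizer of the origin is not finitely generated.

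For this we use the Schreier graph $\Gamma$ of the orbit $F\cdot 0$, with vertices the orbit points and labelled edges $x\to a\cdot x$ and $x\to b\cdot x$. Then $\Stab_F(0)\cong\pi_1(\Gamma,0)$, and this group is finitely generated iff the core of $\Gamma$ is finite, where the core is the union of all reduced closed paths. A cyclically reduced closed path in $\Gamma$ always lies in the core. It therefore suffices to exhibit infinitely many vertices $x_n$ of $\Gamma$, each fixed by some nontrivial cyclically reduced word $w_n$. The closed paths $w_n$ at $x_n$ then force the core to be infinite, and $\Stab_F(0)$ is not finitely generated (indeed of infinite rank).

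The choice I would try first is $u=0$ and $w=(1,0)^T$. Then $a$ acts linearly and fixes the whole $x$-axis pointwise, and the one-letter word $a$ is cyclically reduced. Thus every orbit point of the form $(n,0)$ carries a loop, and it remains to show that $F\cdot 0$ contains $(n,0)$ for infinitely many $n$. We also need $K\neq H$, which is automatic since $b\notin \Stab_F(0)$. The action is $a\cdot(x,y)=(x+2y,y)$ and $b\cdot(x,y)=(x+1,2x+y)$, so $b\cdot 0=(1,0)$ already lies on the axis.

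The main obstacle is the orbit computation: producing infinitely many axis points explicitly. I would look for a word $g$ such that $g$ acts on the axis as a translation, or at least moves axis points to axis points further out. Natural candidates are conjugates such as $b^{k}a^{m}b^{-k}$, or combinations like $b^{-1}a^{m}b$, whose translation parts can be computed from the cocycle identity; one tunes $m$ to cancel the $y$-coordinate. If this particular cocycle turns out to be degenerate, I would instead let $u$ be nonzero and use a different short cyclically reduced word, for example a power of $a$ or a commutator, whose fixed-point set in $\Z^2$ is an affine line. I would then show by an explicit family of words that the orbit meets that line in an unbounded set. The graph-theoretic reduction above is unaffected by these choices.
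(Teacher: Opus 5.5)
Your reduction is correct and matches the paper's framework. You identify $H\cap K$ with $\Stab_F(0)$ for the cocycle-twisted affine action, and then get an infinite Stallings core from infinitely many orbit points that carry nontrivial cyclically reduced loops. The gap is the step you flag yourself: you never produce the infinite family of core vertices, and that is the whole content of Lemmas \ref{lem:orbit-infinitely-many} and \ref{lem:loops}. Worse, for your choice $c(a)=0$, $c(b)=e_1$ the planned route is not just unfinished but false.

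\begin{itemize}
\item No element of $F$ outside $\langle a\rangle$ maps the axis into itself. Its linear part would have to fix the direction $e_1$, hence be a power of $U$. So no word translates along the axis, and your candidates such as $b^{-1}a^mb$ are linear, i.e.\ lie in $\Stab_F(0)$.
\item More decisively, $F\cdot 0$ meets the axis only in $(0,0)$ and $(1,0)$. Put $t=2x-1$ and $Y=4y$. Then $a\colon (t,Y)\mapsto(t+Y,Y)$, and $g=t^2-Y$ is $b$-invariant, with each level set of $g$ a single $b$-orbit.
\item Take a level $g>0$ that is not an odd square. Let $t_g$ be the positive odd integer with $t_g^2-2t_g<g<t_g^2+2t_g$; it exists and is unique because these intervals tile $(-1,\infty)$ with endpoints $\equiv 3\pmod 4$, while $g\equiv1\pmod 4$. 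The only points on level $g$ with $|t|>|Y|/2$ are the two with $|t|=t_g$. Passing to the point of least $|t|$ in their $a$-orbit strictly lowers the level.
\item Iterate this until you reach an odd square or a negative level. This defines a function on $\Z^2$ that is constant on $b$-orbits. It is also constant on $a$-orbits, since every non-minimal point of an $a$-orbit has $|t|>|Y|/2$, so its level descends to that of the minimal point. Hence the function is $F$-invariant.
\item This invariant equals $1$ at the origin and $(2n-1)^2$ at $(n,0)$. So $(n,0)\in F\cdot 0$ only for $n\in\{0,1\}$.
\end{itemize}

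The paper avoids this by taking $c(U)=e_2$. Then the $\alpha$-orbits, like the $\beta$-orbits, are parabolas meeting the line $L=\{P_n\}$ in two points, where $L$ is fixed pointwise by $(\alpha^{-1}\beta)^2$. This gives two different involutions of $L$, namely $\beta^{-2n}\colon P_n\mapsto P_{-n}$ and $\alpha^{-2n-2}\colon P_{-n}\mapsto P_{n+2}$, and their composite translates along $L$. In your set-up the $a$-orbits are horizontal and the axis is fixed pointwise. So the only motion along the axis is the single involution $n\mapsto 1-n$, given by $b^{1-2n}$.

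Your fallback (nonzero $u$, a word with a fixed line, an explicit family of words) is exactly what still has to be done, and the proposal does not do it. Your $K$ can in fact be rescued with a different family of loops. Since $b^{1-2x}$ sends $(x,y)$ to $(1-x,y)$, every point $(x,y)$ is fixed by the cyclically reduced word $b^{2x-1}ab^{1-2x-4y}a$. Hence the infinitely many orbit points $b^j\cdot 0=(j,j(j-1))$ all lie in the core. But neither this nor any other explicit family appears in your proposal.
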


In 2015, Ara\'ujo, Silva and Sykiotis \cite{Sky15} introduced a stronger quantitative version. A group $G$ is called \emph{strongly Howson} if, for all
positive integers $m,n$, there exists a constant $C(m,n)$ such that whenever
$H,K\leq G$ satisfy $\rank(H)\leq m$ and $\rank(K)\leq n$, 
we have
\[
    \rank(H\cap K)\leq C(m,n).
\]
Thus strong Howson is a uniform rank-bounded version of the Howson property. In \cite{ZZ261}, Zhang and Zhao showed that the strongly Howson property is indeed stronger than the Howson property, by constructing the first examples of Howson groups which are not strongly Howson. In this note, we have:

\begin{thm}\label{SL3n not satisfy strong Howson-type uniform rank}
Inside $\SL(3,\Z)$ there exist finitely generated subgroups $H_q,K_q$ for any integer $q\geq 2$ such
that
$\rank(H_q)=\rank(K_q)=4,$
while $H_q\cap K_q$ is finitely generated and
\[
\rank(H_q\cap K_q)\geq q+1.
\]
In particular, $\SL(3,\Z)$ does not satisfy any strong Howson-type uniform rank
bound, even when restricted to those pairs whose intersections are finitely
generated.
\end{thm}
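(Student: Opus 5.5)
We work inside the parabolic subgroup $P=\Z^2\rtimes \SL(2,\Z)\le \SL(3,\Z)$, consisting of matrices with block form $\begin{pmatrix} A & v\\ 0 & 1\end{pmatrix}$ with $A\in\SL(2,\Z)$, $v\in\Z^2$. Fix a free subgroup $F=\langle a,b\rangle\le \SL(2,\Z)$ of finite index; for instance the level-$2$ principal congruence subgroup modulo $\pm I$, generated by $a=\begin{pmatrix}1&2\\0&1\end{pmatrix}$ and $b=\begin{pmatrix}1&0\\2&1\end{pmatrix}$. The idea is to transport the mechanism behind Theorem~\ref{thm sl3n not howson} to a setting where the orbit of the base point is \emph{finite} but large. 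Concretely, set $L_q=q\Z^2$. We take $H_q=\Z^2\rtimes F$, which is generated by $a$, $b$ and the two translations $e_1,e_2$, so that $\rank(H_q)\le 4$. For $K_q$ we take a twisted copy $\{(c(w),w): w\in F\}$ extended by $L_q$, where $c$ is a cocycle chosen so that $F$ acts affinely on $\Z^2/L_q$. A natural choice is the generating set $(v_a,a),(v_b,b),qe_1,qe_2$ for suitable vectors $v_a,v_b$, which again gives $\rank(K_q)\le 4$. Then $H_q\cap K_q$ consists of the elements whose translation part lies in the $L_q$-coset prescribed by the cocycle. Projecting to $F$, this intersection becomes an extension $L_q\cdot\Stab_F(x_0)$, where $\Stab_F(x_0)$ is the stabilizer of a point $x_0$ under the affine action of $F$ on the finite set $\Z^2/q\Z^2$.

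The key steps run as follows.

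\begin{enumerate}[label=(\arabic*)]
\item Verify the identification $H_q\cap K_q\cong q\Z^2\rtimes S_q$ with $S_q=\Stab_F(x_0)$. This is the same computation as in the proof of Theorem~\ref{thm sl3n not howson}, with $\Z^2$ replaced by its quotient by $q\Z^2$.
\item Since the orbit of $x_0$ is finite, $S_q$ has finite index in $F$, so $H_q\cap K_q$ is finitely generated.
\item Choose $v_a,v_b$ so that the orbit of $x_0$ has size at least $q$. For instance, with the linear part trivial mod $q$ (which we may arrange by replacing $F$ by a free subgroup inside $\Gamma(q)$, still $2$-generated), the action is by translations $x\mapsto x+v_w$. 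Taking $v_a=e_1$ and $v_b=0$ makes the orbit $\{x_0+ke_1\}$ have exactly $q$ points.
\item Apply the Schreier index formula: $\rank(S_q)=1+[F:S_q](2-1)\ge q+1$.
\item Pass to the abelianization, or more cheaply map $H_q\cap K_q\to S_q$ via the projection to the linear part, to obtain $\rank(H_q\cap K_q)\ge \rank(S_q)\ge q+1$.
\end{enumerate}

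Step (5) uses the fact that rank does not increase under quotients. The map $H_q\cap K_q\to S_q$ is surjective with kernel $q\Z^2$, so the rank inequality follows immediately.

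The obstacles are mild. First, we must check that the ranks of $H_q$ and $K_q$ are exactly $4$, not just at most $4$. Both groups surject onto $\Z^2\rtimes F$ or onto $F\times(\text{finite})$, and abelianizing $H_q$ gives $\Z^2\oplus(\Z^2/\text{image})$; if exact equality proves awkward, padding with extra generators fixes it. Second, and more substantively, we must ensure that $K_q$ is really a subgroup whose intersection with $H_q$ is as described. Here it is cleanest to define $K_q$ as the preimage, in $\Z^2\rtimes F$, of the stabilizer of $x_0$ in the affine action on $\Z^2/q\Z^2$. However, that preimage is the intersection itself and may have large rank. So instead we define $K_q=\langle (e_1,a),(0,b), qe_1, qe_2\rangle$ and verify directly that $(v,w)\in K_q$ if and only if $v\equiv \phi(w)e_1 \pmod q$, where $\phi\colon F\to\Z$ is the exponent sum in $a$. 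This uses that $F\le\Gamma(q)$ acts trivially on $\Z^2/q\Z^2$, so the cocycle reduces to a homomorphism.

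The intersection with $H_q=\Z^2\rtimes F$ would then be all of $K_q$, which is not the right setup. To repair this, we take $H_q=\langle a,b,qe_1,qe_2\rangle=q\Z^2\rtimes F$ instead. Then $H_q\cap K_q=q\Z^2\rtimes\ker(\phi \bmod q)$, the kernel has index $q$ in $F$, and steps (4) and (5) give rank at least $q+1$. This choice is the one to commit to, and checking the membership criterion for $K_q$ is the only real computation.
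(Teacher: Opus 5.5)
Your final construction is correct, but it takes a different route from the paper's. The paper keeps one fixed Sanov group $F=\langle U,V\rangle$ and the same cocycle as in Theorem~\ref{thm sl3n not howson} ($c(U)=e_2$, $c(V)=e_1$). So its action on $(\Z/q\Z)^2$ is genuinely affine, and it gets only a lower bound $[F:N_q]\ge q$, by noting that the orbit points $P_0,\dots,P_{q-1}$ of Lemma~\ref{lem:orbit-infinitely-many} stay distinct mod $q$. You instead let the free group depend on $q$, taking $F\le\Gamma(q)$. Then the linear part acts trivially mod $q$, and the cocycle reduces to the homomorphism $w\mapsto\phi(w)e_1$. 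This gives an exact description $H_q\cap K_q=q\Z^2\rtimes\ker(\phi\bmod q)$, with index exactly $q$ and a free quotient of rank exactly $q+1$. Your version is more self-contained and avoids the orbit computation. The paper's version reuses the non-Howson example and keeps every $H_q,K_q$ inside one fixed $\Z^2\rtimes F_2$, which yields its final theorem. You could get that too by choosing $F=\langle U^q,V^q\rangle\le\Gamma(2q)\le\Gamma(q)$.

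Two points to tidy.

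\emph{Choice of $F$.} Name a concrete free $F\le\Gamma(q)$. Either $\langle U^q,V^q\rangle$ works (free on these two elements, as powers of a free basis), or the pair of unipotent matrices with off-diagonal entry $q$ (free by ping-pong for $q\ge 2$).

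\emph{Rank exactly $4$.} Drop the ``padding with extra generators'' fallback. Adding redundant generators does not change the rank of a group, and enlarging the groups would change the intersection. It is also unnecessary, because the abelianization argument you gesture at works directly. Since $F\le\Gamma(q)$ acts trivially on $q\Z^2/q^2\Z^2$, the group $H_q=q\Z^2\rtimes F$ surjects onto $(\Z/q\Z)^2\times F$, hence onto $\Z^2\oplus(\Z/q\Z)^2$. That group needs $4$ generators, as you see by tensoring with $\Z/p$ for a prime $p\mid q$. Moreover $K_q\cong q\Z^2\rtimes F$ with the same action: the lifts $(e_1,a),(0,b)$ generate a complement to $q\Z^2$ that projects isomorphically onto $F$ and acts on $q\Z^2$ by conjugation through its linear parts. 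Hence $\rank(K_q)=4$ as well.
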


\section{Schreier graphs: basic facts}

We shall use only standard facts about Schreier graphs of subgroups of free
groups.  See Stallings \cite{Stallings1983} for the core-graph viewpoint and
Lyndon--Schupp \cite[Chapter I]{LyndonSchupp1977} for classical combinatorial
group theory background.

Let $F=F(X)$ be the free group on a finite set $X$, and let $N\leq F$ be a
subgroup.

\begin{defn}[Schreier graph]
The \emph{(right) Schreier graph} $\Gamma(N\backslash F,X)$ has vertex set
$N\backslash F$, the set of right cosets.  For every vertex $Ng$ and every generator $x\in X$, there is
a directed edge
   $Ng \xrightarrow{x} Ngx$.
Equivalently, one may include inverse edges labeled $x^{-1}$.
\end{defn}

The following properties are the ones used in the proof of our main theorems.

\begin{enumerate}[label=\textbf{S\arabic*.}, leftmargin=2.4em]
\item \textbf{Connectedness.}
The Schreier graph $\Gamma(N\backslash F,X)$ is connected.  Indeed, every
vertex $Ng$ is reached from the base vertex $N$ by reading a word representing
$g$.

\item \textbf{Words and paths.}
A word in the alphabet $X^{\pm1}$ labels a path in the Schreier graph.  If the
word is freely reduced, then the corresponding path has no immediate
backtracking.

\item \textbf{Loops and the subgroup.}
A word $w\in F$ labels a loop at the base vertex $N$ if and only if $w\in N$.
More generally, $w$ labels a loop at the vertex $Ng$ if and only if
$gwg^{-1}\in N$, with the convention appropriate to right cosets.

\item \textbf{Orbital interpretation.}
If $F$ acts transitively on a set $\Omega$ and $\omega_0\in \Omega$, then the
Schreier graph of $\Stab_F(\omega_0)$ is naturally isomorphic to the orbital
graph of the action on $\omega_0 F$ (The orbital graph has vertex set $\Omega=\omega_0 F$, and two vertices $\omega_0 g,\omega_0 gx$ are connected by an edge for $x\in X$).  Under this isomorphism, the base vertex
corresponds to $\omega_0$.

\item \textbf{The core.}
The \emph{core} of a Schreier graph is the subgraph spanned by all edges and
vertices lying on reduced closed paths.  Equivalently, it is obtained by
pruning off all hanging trees.  A vertex that lies on a nontrivial reduced
closed path lies in the core.

\end{enumerate}
Some form of the following is known to Stallings \cite{Stallings1983}.

\begin{lem}[Stallings core criterion]\label{Stallings core
criterion}
    For a subgroup $N\leq F(X)$, the subgroup $N$ is finitely generated if and only
if the core of $\Gamma(N\backslash F,X)$ is finite.  In particular, if the core
contains infinitely many vertices, then $N$ is not finitely generated.
\end{lem}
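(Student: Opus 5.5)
The plan is to prove both directions by comparing $N$ with $\pi_1$ of the core graph. Let $\Gamma=\Gamma(N\backslash F,X)$ with base vertex $v_0=N$, and let $C\subseteq\Gamma$ be its core. It is convenient to enlarge $C$ slightly to $C_0$, the union of $C$ with a shortest path from $v_0$ to $C$; if $C$ is empty, then $N$ is trivial and the statement holds. Since we adjoin only one finite path, $C_0$ is finite exactly when $C$ is. The standard fact we rely on is that the labelling map identifies $\pi_1(\Gamma,v_0)$ with $N$, using \textbf{S3}: reading the label of a loop at $v_0$ gives a homomorphism $\pi_1(\Gamma,v_0)\to F$. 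Its image is $N$. It is injective because $\Gamma$ is folded: at each vertex there is at most one outgoing and one incoming edge with each label. Consequently a reduced loop has a freely reduced label, which is nontrivial in $F$. Combined with \textbf{S2}, this shows that reduced loops correspond bijectively to reduced words in $N$.

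Next we show that $\pi_1(C_0,v_0)\to\pi_1(\Gamma,v_0)$ is an isomorphism. Every element of $N$ is represented by a unique reduced loop at $v_0$. Such a loop, once its initial tail along the connecting path is removed, becomes a cyclically reduced closed path, and it therefore lies in $C$ by \textbf{S5}. Hence every reduced loop at $v_0$ lies in $C_0$, which gives surjectivity; injectivity is automatic because $C_0$ is a subgraph. Equivalently, $\Gamma$ deformation retracts onto $C_0$, since $\Gamma\setminus C_0$ consists of hanging trees.

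For the implication ``finite core $\Rightarrow$ finitely generated'': if $C_0$ is a finite connected graph, its fundamental group is free of rank $E-V+1<\infty$, where $E$ and $V$ are the numbers of edges and vertices of $C_0$. So $N\cong\pi_1(C_0,v_0)$ is finitely generated.

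For the converse, suppose $N=\langle w_1,\dots,w_k\rangle$. Each $w_i$ is represented by a reduced loop $\gamma_i$ at $v_0$, and the union $D=\bigcup\gamma_i$ is a finite subgraph. Every element of $N$ is a product of the $w_i^{\pm1}$, so it is represented by a concatenation of the $\gamma_i^{\pm1}$. Free reduction of that concatenation, which is the same as removing backtracks, keeps the path inside $D$. Thus every reduced loop at $v_0$ lies in $D$. Now take any edge of $C$. It lies on a reduced closed path $\delta$, and we conjugate $\delta$ by a path $p$ from $v_0$ to $\delta$ and reduce. I expect this step to be the main technical point: reduction can cancel part of $\delta$. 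The fix is to choose the basepoint of $\delta$ on the given edge and to replace $\delta$ by a cyclically reduced version that still passes through that edge. Then $p\delta^n\bar p$ reduces to $p'\delta'^n\bar p'$, where $\delta'$ is a cyclic rotation of $\delta$ and still traverses the edge. This places the edge in $D$, so $C\subseteq D$ is finite. The final sentence of the lemma then follows immediately, since finiteness of the core is equivalent to finitely many vertices; a finite-degree graph with finitely many vertices has finitely many edges.
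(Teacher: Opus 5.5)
Your proposal is correct and takes the same approach as the paper. Both identify $N$, via labels, with the group of reduced closed paths at the base vertex, i.e. with $\pi_1(\Gamma,v_0)$. Where the paper only asserts that this group is finitely generated iff the core is finite, you prove both directions: the rank $E-V+1$ of the finite graph $C_0$ gives one, and the inclusion $C\subseteq D$, obtained from the reduced conjugates $p'\delta'^n\bar p'$ of cyclically reduced cycles, gives the other.

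Two wording points need fixing. Deleting the connecting path from a reduced loop at $v_0$ can leave $s\delta\bar s$ with $s\subseteq C$, which is not cyclically reduced; so justify "reduced loops lie in $C_0$" by your hanging-tree/deformation-retraction remark instead. In the converse, take $\delta$ cyclically reduced from the start, as your definition of $C$ allows; do not try to "cyclically reduce" an arbitrary reduced closed path, whose reduction may miss the chosen edge.
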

\begin{proof}
    A word $w\in F$ labels a loop at the base vertex $N$ in the graph $\Gamma(N\backslash F,X)$  if and only if $w\in N$. Therefore, the subgroup $N$ is the set $P$ of labels of (reduced) closed paths based at the vertex $N$.  Note that $P$ can be viewed as a group (isomorphic to the group $N$) with the product defined as the concatenation of labels of paths. Therefore, $N$ is finitely generated if and only if the core is finite.
\end{proof}

\begin{rem}
The Schreier graph is also referred to as the relative Cayley graph in some literature, for example, see \cite[Section 2]{ZZ262}. 
 
Given a left action of a group $G$ on a set $X$, a right action can be defined via $x\cdot g:=g^{-1}\cdot x$. Note that the stabilizers of a point in $X$ are isomorphic for these two actions. While Schreier graphs and Cayley graphs usually adopt right multiplication, matrix actions adopt left multiplication. Following standard conventions, we adopt left group actions on sets throughout the subsequent two sections. The proofs of Theorems \ref{thm sl3n not howson} and \ref{SL3n not satisfy strong Howson-type uniform rank} rely on analyzing the corresponding stabilizers.
\end{rem}

\section{Proof of Theorem \ref{thm sl3n not howson}}

\subsection{Two $2$-generated subgroups of $\SL(3,\Z)$}
Consider the standard parabolic subgroup
\begin{equation}\label{eq. P}
  P=
\left\{
\begin{pmatrix}
A & v\\
0 & 1
\end{pmatrix}
\mid A\in \SL(2,\Z),\ v\in \Z^2
\right\}
\leq \SL(3,\Z).  
\end{equation}
We identify $P$ with the semidirect
product $\Z^2\rtimes \SL(2,\Z)$, with multiplication
\[
(v,A)(v',A')=(v+Av',AA').
\]
Moreover,
$\begin{pmatrix}
A & v\\
0 & 1
\end{pmatrix}
$
acts on column vectors $x\in \Z^2$ as the affine transformation
$x\longmapsto Ax+v.$ 
Let
\begin{equation}\label{eq. U V}
 U=\begin{pmatrix}1&2\\0&1\end{pmatrix},
\qquad
V=\begin{pmatrix}1&0\\2&1\end{pmatrix}.   
\end{equation}
By Sanov's theorem \cite{Sanov1947}, $U$ and $V$ freely generate a free subgroup
\begin{equation}\label{eq. F}
 F=\langle U,V\rangle\cong F_2   
\end{equation}
of $\SL(2,\Z)$. Define $H=\langle u,v\rangle$, where
\[
u=
\begin{pmatrix}
U&0\\
0&1
\end{pmatrix},
\qquad
v=
\begin{pmatrix}
V&0\\
0&1
\end{pmatrix}.
\]
Thus $H$ is the copy of $F$ sitting linearly in the upper-left block:
\[
H=
\left\{
\begin{pmatrix}
w & 0\\
0 & 1
\end{pmatrix}
\mid w\in F
\right\}
\cong F_2.
\]

Next define affine lifts of $U$ and $V$ by
\begin{equation}\label{eq. hatu hatv}
\hat u=
\begin{pmatrix}
1&2&0\\
0&1&1\\
0&0&1
\end{pmatrix},
\qquad
\hat v=
\begin{pmatrix}
1&0&1\\
2&1&0\\
0&0&1
\end{pmatrix}, 
\end{equation}
and put
\[
    K=\langle \hat u,\hat v\rangle.
\]
The projection
\[
    P=\Z^2\rtimes \SL(2,\Z)\longrightarrow \SL(2,\Z)
\]
sends $\hat u$ to $U$ and $\hat v$ to $V$.  Since $U,V$ freely generate
$F$, this projection restricts to an isomorphism $K\cong F$. Therefore $K$ is also freely generated by two elements.

\subsection{The intersection as a stabilizer}

The matrices $\hat u$ and $\hat v$ act on $\Z^2$ as the affine maps
\begin{equation}\label{eq. alapa beta}
  \alpha(x,y)=(x+2y,\ y+1),\quad   \beta(x,y)=(x+1,\ 2x+y).
\end{equation}

For a word $w\in F=\langle U,V\rangle$, let $c(w)\in \Z^2$ denote the
translation part of the corresponding element of $K$.  Thus the element of $K$
with linear part $w$ has the form
\[
\begin{pmatrix}
w & c(w)\\
0 & 1
\end{pmatrix}.
\]
The corresponding element of $H$ has translation part $0$.  Therefore,
\[
    H\cap K=\{w\in F \mid c(w)=0\}:=N.
\]

Since the affine transformation associated to $w$ sends $(0,0)$ to $c(w)$, this
subgroup can be described as
\[
    N=\Stab_F(0,0)
\]
for the affine action of $F$ on $\Z^2$ generated by $\alpha$ and $\beta$.
Thus proving that $H\cap K$ is not finitely generated is equivalent to proving
that this stabilizer $N$ is not finitely generated.

\subsection{Infinitely many vertices in the Schreier core}

For $n\in \Z$, define
\begin{equation}\label{eq. Pn}
   P_n=(n,1-n)\in \Z^2. 
\end{equation}
These points lie on the affine line $L=\{(n,1-n)\mid n\in \Z\}.$

\begin{lem}\label{lem:orbit-infinitely-many}
The orbit of $(0,0)$ under the affine action generated by $\alpha$ and $\beta$
contains all the points $P_n$ for $n\in \Z$.
\end{lem}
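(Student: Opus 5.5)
The plan is to produce explicit elements of $F$ that move the points $P_n$ along the line $L$, and then to obtain every $P_n$ from $(0,0)$ by composing them. Throughout I use the action on $\Z^2$ given by \eqref{eq. alapa beta}, together with the inverses
\[
\alpha^{-1}(x,y)=(x-2y+2,\ y-1),\qquad \beta^{-1}(x,y)=(x-1,\ y-2x+2).
\]

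\emph{Step 1 (base points).} I first check by hand that a few $P_n$ lie in the orbit of $(0,0)$:
\[
\alpha(0,0)=(0,1)=P_0,\quad \beta(0,0)=(1,0)=P_1,\quad \alpha^{-1}(0,0)=(2,-1)=P_2,\quad \beta^{-1}(0,0)=(-1,2)=P_{-1}.
\]

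\emph{Step 2 (one reflection of $L$).} Let $D=\{(m,m)\}$ be the diagonal. A direct substitution gives
\[
\beta(P_n)=(n+1,n+1),\qquad \alpha(P_n)=(2-n,2-n),
\]
\[
\beta^{-1}(m,m)=P_{m-1},\qquad \alpha^{-1}(m,m)=P_{2-m}.
\]
Hence the element $s=\beta^{-1}\alpha$ maps $L$ into itself and satisfies $s(P_n)=P_{1-n}$. So $s$ acts on the index $n$ as the reflection $n\mapsto 1-n$. The same map is also realized by $\alpha^{-1}\beta$.

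\emph{Step 3 (a second reflection or a translation).} A reflection alone only produces the pairs $\{P_n,P_{1-n}\}$. I therefore need a second element $t\in F$ preserving $L$ that acts as $n\mapsto c-n$ with $c\neq 1$, or directly as a shift $n\mapsto n+k$. Once I have such a $t$, the composite $ts$ acts on indices as the translation $n\mapsto n+(c-1)$. Combining this translation with $s$ and with the base points $P_{-1},P_0,P_1,P_2$ from Step 1 then covers every residue class, and hence all of $L$.

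\emph{Main obstacle.} Finding $t$ is where I expect the real difficulty, since the obvious short words only reproduce $s$. My first attempt would be to exploit intermediate lines, in the same way the diagonal $D$ served as an intermediate stop in Step 2. The line $\alpha^{-1}(L)=\{(3n,-n)\}$ is one example; its images under $\beta^{\pm1}$ and $\alpha^{\pm1}$ can be computed, and I would look for a return path onto $L$ that is valid for all $n$. I would also use the fact that $(0,0)$ is adjacent to four points of $L$: this suggests building $t$ from $s$ conjugated by words that map $P_0$ or $P_1$ to $P_2$ or $P_{-1}$. I would run a small search over words of length at most $4$, checking each as an affine map on the parametrized line $(n,1-n)$, which is a linear condition in $n$.

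If no single $t$ preserves $L$ for every $n$, the fallback is an induction. I would show that from $P_n$ and $P_{n-1}$ one can reach $P_{n+1}$ using a word whose intermediate points depend on $n$, proving the required equalities by substituting the linear formulas above.
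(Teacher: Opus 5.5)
Your Steps 1 and 2 are correct. The inverse formulas, the four base points $P_{-1},P_0,P_1,P_2$, and $s(P_n)=P_{1-n}$ all check out. But closing $\{P_{-1},P_0,P_1,P_2\}$ under $s$ adds nothing, so the whole content of the lemma sits in Step 3. You leave that step as a search, and in its primary form the search cannot succeed.

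If $g\in F$ acts by an affine map sending $L$ into $L$, its linear part must send $(1,-1)$ to $\pm(1,-1)$. The stabilizer of that unoriented direction in $\SL(2,\Z)$ is $\{\pm I\}$ times an infinite cyclic group of transvections. Hence its intersection with the torsion-free group $F$ is cyclic, and it contains $U^{-1}V$. This element is part of the free basis $\{U,U^{-1}V\}$, so it is not a proper power, and the intersection is exactly $\langle U^{-1}V\rangle$. Since $K\to F$ is an isomorphism, each $w\in F$ determines a unique affine map. So the only elements preserving $L$ are the $(\alpha^{-1}\beta)^j$, which act on $L$ as the identity or as $s$: no $t$ of the kind you want exists.

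Moreover, a fixed word $g$ with $g(L)\neq L$ sends at most one point of $L$ into $L$, since two distinct lines meet at most once. So an inductive step $P_n\mapsto P_{n+1}$ valid for all $n$ needs infinitely many distinct words, of unbounded length. A search over words of length at most $4$ cannot supply them, and your fallback induction is stated without any candidate word.

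The missing ingredient is to use powers whose exponents depend on $n$. These have closed forms:
\[
\alpha^m(x,y)=\bigl(x+2my+m(m-1),\ y+m\bigr),\qquad \beta^m(x,y)=\bigl(x+m,\ y+2mx+m(m-1)\bigr).
\]
The paper uses them to check $\beta^{-2n}(P_n)=P_{-n}$ and $\alpha^{-2n-2}(P_{-n})=P_{n+2}$ for $n\geq 0$. It then climbs from $P_0=\alpha(0,0)$ and $P_1=\beta(0,0)$ to reach every $P_n$.

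Your diagonal idea also finishes quickly once exponents may vary. One has $\alpha^{1-2k}(k,k)=P_k$, and together with $\beta(P_n)=(n+1,n+1)$ this gives $\alpha^{-2n-1}\beta(P_n)=P_{n+1}$ for every $n\in\Z$. Induction upward and downward from $P_0$ then covers all of $L$.
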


\begin{proof}
First,
\[
    \alpha(0,0)=(0,1)=P_0,
\]
and
\[
    \beta(0,0)=(1,0)=P_1.
\]
For every integer $m$, a direct calculation gives
\[
    \alpha^m(x,y)=\bigl(x+2my+m(m-1),\ y+m\bigr),
\]
and
\[
    \beta^m(x,y)=\bigl(x+m,\ y+2mx+m(m-1)\bigr).
\]
Applying these formulas with negative exponents gives, for $n\geq 0$,
\[
\beta^{-2n}(P_n)=P_{-n},
\qquad
\alpha^{-2n-2}(P_{-n})=P_{n+2}.
\]
Thus, starting from $P_0$ and $P_1$, we obtain successively all points
$P_n$ with all $n\in \Z$.
\end{proof}

\begin{lem}\label{lem:loops}
Every point $P_n$ lies on a nontrivial reduced closed path in the orbital
Schreier graph of the affine action.
\end{lem}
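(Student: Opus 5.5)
The plan is to reduce the statement to a single explicit loop, which we then transport along the line $L$ using the moves already found in the proof of Lemma~\ref{lem:orbit-infinitely-many}. Recall S3: a freely reduced word $w\neq 1$ with $w(P_n)=P_n$ labels a closed path at $P_n$ in the orbital Schreier graph (S4). If, in addition, $w$ is cyclically reduced, the path has no backtracking anywhere, including at the base point, so $P_n$ lies on a nontrivial reduced closed path. So for each $n$ we want a nontrivial cyclically reduced word in $\Stab_F(P_n)$.

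\emph{Step 1: structure of the moves along $L$.} Using the formulas for $\alpha^m$ and $\beta^m$, we check which powers send $P_n$ back into $L$. For $\alpha^m(P_n)$ the condition $x+y=1$ becomes $m^2+2m-2mn=0$, so $m=0$ or $m=2n-2$, and then $\alpha^{2n-2}(P_n)=P_{2-n}$. For $\beta^m(P_n)$ the condition becomes $m^2+2mn=0$, so $m=0$ or $m=-2n$, and then $\beta^{-2n}(P_n)=P_{-n}$. Hence the edges that stay on $L$ come from two involutive pairings, $n\leftrightarrow 2-n$ and $n\leftrightarrow -n$. Their composition is the shift $n\mapsto n+2$, up to reversing orientation. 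For each $n$ this gives an explicit reduced word $h_n$ with $h_n(P_0)=P_n$ or $h_n(P_1)=P_n$, built as an alternating product of powers of $\alpha$ and $\beta$. The line $L$ alone carries no cycle, so the loops must leave $L$.

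\emph{Step 2: one explicit loop.} We look for a nontrivial cyclically reduced $g\in F$ fixing $P_0$ or $P_1$. The first candidates are products $\alpha^{a}\beta^{b}\alpha^{c}\beta^{d}$ with small exponents, or commutators $[\alpha^{a},\beta^{b}]$. For each candidate, the fixed-point equation $(M-I)x=-t$ is a $2\times 2$ integer linear system, where $M$ is the linear part and $t$ the translation part. When $\operatorname{tr}M\neq 2$ it has a unique rational solution, and we just need the exponents tuned so that this solution is $P_0$ or $P_1$. Alternatively we look for two distinct reduced words $w_1\neq w_2$ with $w_1(P_0)=w_2(P_0)$ lying off $L$, and set $g=w_2^{-1}w_1$. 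We expect this search to be the main obstacle: it is a finite but fiddly computation, and the parity constraints (mod $2$, $\alpha$ and $\beta$ act as the translations $(0,1)$ and $(1,0)$) guide the choice of exponents.

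\emph{Step 3: transport.} Set $w_n=h_ngh_n^{-1}$, which fixes $P_n$. We choose $g$ so that its first and last letters are not inverse to the letters of $h_n$ adjacent to it. This can be arranged by choosing $g$, or by replacing $g$ with a suitable cyclic conjugate, or with $g^{\pm1}$, so that it begins and ends with letters not in $\{\alpha^{\pm1}\}$ or not in $\{\beta^{\pm1}\}$, depending on the last letter of $h_n$. Then $w_n$ is freely reduced and its path is a lollipop: the reduced segment $h_n$ from $P_n$, the cycle $g$, and $h_n$ back. The cycle itself is reduced, so its vertices lie in the core. To put $P_n$ itself on a reduced closed path, we use instead the closed path $h_n g h_n^{-1}\,h'_n g' h_n'^{-1}$, where $h'_n$ is the other $L$-route to $P_n$ (through the other pairing) and $g'$ is a loop at the corresponding base point. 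Alternatively, we observe that $P_n$ lies on a reduced path joining two core cycles, which puts it in the core by S5. Combined with Lemma~\ref{lem:orbit-infinitely-many}, this gives infinitely many core vertices.
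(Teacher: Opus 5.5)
There is a genuine gap, in two places. First, Step 2 is never carried out. You do not exhibit any nontrivial word fixing $P_0$ or $P_1$, and producing such a word is essentially the whole content of the lemma.

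Second, and more seriously, Step 3 cannot succeed even after Step 2 is done. Conjugating a loop $g$ at $P_0$ (or $P_1$) by $h_n$ gives a closed path at $P_n$ whose only cycle is still the $g$-cycle at the base point. Conjugation never creates a new cycle, so $P_n$ sits on the stem of a lollipop and is not shown to be in the core.

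Your repair via a second route $h'_n$ fails by your own Step 1. The pairings $n\leftrightarrow 2-n$ and $n\leftrightarrow -n$ arrange the $P_n$ into two rays, $P_0,P_2,P_{-2},P_4,P_{-4},\dots$ and $P_1,P_{-1},P_3,P_{-3},\dots$. These are joined only through $(0,0)$, since $\alpha(0,0)=P_0$ and $\beta(0,0)=P_1$. So from $P_n$ the ``other pairing'' leads outward along the ray and never reaches $P_0$ or $P_1$. For the same reason, your alternative claim that $P_n$ lies on a reduced path joining two core cycles has no second cycle beyond $P_n$ to point to. Conjugates of one loop (or of loops at $P_0$ and $P_1$) put only finitely many vertices in the core.

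The missing idea is to find a single cyclically reduced word that fixes every $P_n$ at once, so that no transport is needed. Your Step 1 examined only powers of one generator. The two-letter move $\alpha^{-1}\beta$ also preserves $L$:
\[
\beta(n,1-n)=(n+1,n+1),\qquad \alpha^{-1}(n+1,n+1)=(1-n,n)=P_{1-n}.
\]
Hence $(\alpha^{-1}\beta)^2$ fixes every $P_n$. Since $U^{-1}VU^{-1}V$ is cyclically reduced, it labels the closed path $P_n\to(n+1,n+1)\to P_{1-n}\to(2-n,2-n)\to P_n$. This path has no backtracking anywhere, including at the base point, so it is a nontrivial reduced closed path through each $P_n$. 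This is exactly the paper's proof.
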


\begin{proof}
Consider the reduced word
\[
    r=(U^{-1}V)^2=U^{-1}VU^{-1}V\in F.
\]
It is nontrivial because $U,V$ freely generate $F$.  The corresponding affine
transformation is $(\alpha^{-1}\beta)^2$.
For $P_n=(n,1-n)$, one checks that
$\alpha^{-1}\beta(P_n)=P_{1-n}.$
Therefore
\[
    (\alpha^{-1}\beta)^2(P_n)=P_n
\]
for every $n\in \Z$.  Hence the word $r$ labels a nontrivial reduced closed
path at every vertex $P_n$.
\end{proof}

\begin{prop}\label{prop:N-not-fg}
The stabilizer $N=\Stab_F(0,0)$ is not finitely generated.
\end{prop}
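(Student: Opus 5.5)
The plan is to combine Lemma~\ref{lem:orbit-infinitely-many} and Lemma~\ref{lem:loops} with the Stallings core criterion (Lemma~\ref{Stallings core criterion}), through the orbital interpretation S4.

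\textbf{Step 1: identify the Schreier graph.} By S4, the Schreier graph $\Gamma(N\backslash F,\{U,V\})$ is isomorphic to the orbital graph of the affine action on the orbit $\Omega$ of $(0,0)$, with base vertex $(0,0)$. The action here is a left action, so we pass to the right action $x\cdot g=g^{-1}x$ as in the Remark. This changes neither the stabilizer nor the vertex set $\Omega$. It only reverses the orientation and labels of edges, which does not affect which vertices lie on reduced closed paths. If $r$ labels a reduced loop in one convention, then $r^{-1}$, also reduced and nontrivial, labels one in the other.

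\textbf{Step 2: the points $P_n$ give distinct core vertices.} By Lemma~\ref{lem:orbit-infinitely-many}, every $P_n$ is a vertex of the orbital graph, and the $P_n$ are pairwise distinct points of $\Z^2$. By Lemma~\ref{lem:loops}, each $P_n$ lies on a nontrivial reduced closed path, namely the one labeled by $r=(U^{-1}V)^2$.

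One subtlety needs care here. A reduced word may label a closed path that is not cyclically reduced. In that case the path still has no backtracking, so the vertex still lies on a reduced closed path and hence in the core by S5. In fact $r$ is cyclically reduced, since its first letter $U^{-1}$ and last letter $V$ are not mutually inverse. So the loop at $P_n$ can be traversed repeatedly without backtracking, and its vertices lie on a cycle of the graph, not on a hanging tree. Therefore every $P_n$ is in the core.

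\textbf{Step 3: conclude.} The core thus contains the infinitely many distinct vertices $P_n$, $n\in\Z$. By Lemma~\ref{Stallings core criterion}, $N$ is not finitely generated.

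The main obstacle is justifying that the core of a Schreier graph, as used in the criterion, really contains every vertex on a reduced cyclic loop. In the standard Stallings sense the core is the union of reduced loops based at the base vertex, together with the path to them. Since the graph is connected (S1), a cycle through $P_n$ can be joined to the base vertex by a geodesic $p$. Then $p\,c\,p^{-1}$, after free reduction, is a reduced loop at the base vertex passing through the cycle $c$. The cycle cannot cancel completely, because it is a genuine cycle in the graph. If needed, I would write this out to make sure the $P_n$ survive the pruning of hanging trees.
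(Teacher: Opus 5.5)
Your proposal is correct and is essentially the paper's proof. You identify the Schreier graph of $N$ with the orbital graph, use Lemma~\ref{lem:orbit-infinitely-many} and Lemma~\ref{lem:loops} to place the infinitely many distinct points $P_n$ in the core, and conclude with the Stallings core criterion. One aside is false in general: a vertex lying only on a reduced but not cyclically reduced closed path need not be in the core (e.g.\ the free end of a hanging path leading to a cycle), so it is the cyclic reducedness of $r=(U^{-1}V)^2$, which you do verify, that carries the argument.
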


\begin{proof}
By Lemma~\ref{lem:orbit-infinitely-many}, the orbit of $(0,0)$ contains infinitely
many distinct vertices $P_n$.  By Lemma~\ref{lem:loops}, each of these vertices lies
on a nontrivial reduced closed path.  Therefore the core of the orbital
Schreier graph contains infinitely many vertices.

By the orbital interpretation of Schreier graphs, this orbital graph is the
Schreier graph of the subgroup $N=\Stab_F(0,0)$.  By the Stallings core
criterion (Lemma \ref{Stallings core criterion}), a subgroup of a finitely generated free group is finitely generated
if and only if its Schreier core is finite.  Since the core here is infinite,
$N$ is not finitely generated.
\end{proof}

\subsection{$\SL(3,\Z)$ is not Howson.}
\begin{proof}[Proof of Theorem \ref{thm sl3n not howson}]
We have constructed two subgroups
\[
    H=\langle u,v\rangle,
    \qquad
    K=\langle \hat u,\hat v\rangle
\]
of $\SL(3,\Z)$, both generated by two elements, such that
\[
    H\cap K\cong N=\Stab_F(0,0).
\]
By Proposition~\ref{prop:N-not-fg}, $N$ is not finitely generated.  Therefore
$H\cap K$ is not finitely generated. Hence $\SL(3,\Z)$ is not a Howson group.
\end{proof}

In fact, $\SL(3,\Z)$ fails to be a Howson group, as it contains
$\Z^2\rtimes \SL(2,\Z)\geq \Z^2\rtimes F_2$. The proof actually proves the following result.

\begin{cor}
    Both $\Z^2\rtimes F_2$ and $\Z^2\rtimes \SL(2,\Z)$ are not Howson.
\end{cor}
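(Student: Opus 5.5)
The corollary is a direct reading of the proof of Theorem \ref{thm sl3n not howson}. All the groups involved already live in the parabolic subgroup $P\cong\Z^2\rtimes \SL(2,\Z)$ of \eqref{eq. P}, so the plan is to check that the construction never leaves $P$, and in fact never leaves a smaller subgroup isomorphic to $\Z^2\rtimes F_2$.

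\textbf{Step 1: the group $\Z^2\rtimes \SL(2,\Z)$.} By definition, $u,v,\hat u,\hat v$ all lie in $P$, so $H,K\leq P$. The subgroups $H=\langle u,v\rangle$ and $K=\langle \hat u,\hat v\rangle$ are $2$-generated subgroups of $P$. Their intersection $H\cap K$ does not depend on the ambient group. By Proposition~\ref{prop:N-not-fg} it equals $N=\Stab_F(0,0)$, which is not finitely generated. Hence $P\cong\Z^2\rtimes\SL(2,\Z)$ is not Howson.

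\textbf{Step 2: the group $\Z^2\rtimes F_2$.} Let $F=\langle U,V\rangle\cong F_2$ as in \eqref{eq. F}, and set
\[
Q=\left\{\begin{pmatrix} A & x\\ 0&1\end{pmatrix}\mid A\in F,\ x\in\Z^2\right\}\leq P.
\]
Since $F$ is a subgroup of $\SL(2,\Z)$, $Q$ is a subgroup of $P$, and under the identification $P=\Z^2\rtimes\SL(2,\Z)$ it is exactly $\Z^2\rtimes F$ with $F$ acting linearly. The linear parts of $u,v,\hat u,\hat v$ are $U$ and $V$, which lie in $F$, so $H,K\leq Q$. The same intersection argument then shows that $Q\cong\Z^2\rtimes F_2$ is not Howson.

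\textbf{Step 3: the intended action.} If "$\Z^2\rtimes F_2$" is meant with the standard action of $F_2=\langle a,b\rangle$ given by $a\mapsto U$, $b\mapsto V$, this is exactly the group $Q$ above, because $U,V$ freely generate $F$ by Sanov's theorem. Alternatively, one can simply note that the Howson property passes to subgroups, as recalled in the introduction, so failure for $Q$ forces failure for $P$. Either route gives the statement.

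The only point needing care is Step 3, namely making precise which semidirect product is meant. Everything else is the observation that intersections of subgroups are intrinsic, so passing to a smaller ambient group does not change $H\cap K$.
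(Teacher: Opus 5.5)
Your proposal is correct and follows the paper's own reasoning. The paper simply observes that $H=\langle u,v\rangle$ and $K=\langle \hat u,\hat v\rangle$ already lie in $\Z^2\rtimes F\leq \Z^2\rtimes\SL(2,\Z)$, with $F=\langle U,V\rangle$ acting linearly. Since the intersection $H\cap K\cong N$ does not depend on the ambient group and is not finitely generated, both groups fail to be Howson, exactly as you argue via $Q$ and $P$.
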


\section{Proof of Theorem \ref{SL3n not satisfy strong Howson-type uniform rank}}

In this section, we shall construct, inside $\SL(3,\Z)$, two families of finitely generated subgroups
$H_q$ and $K_q$ such that $\rank(H_q)=\rank(K_q)=4$,
while $H_q\cap K_q$ is finitely generated, but
\[
\rank(H_q\cap K_q)\to \infty.
\]

\subsection{The subgroups $H_q$ and $K_q$}
We take $P$ as in Eq. (\ref{eq. P}) and
\[
U=\begin{pmatrix}1&2\\0&1\end{pmatrix},
\qquad
V=\begin{pmatrix}1&0\\2&1\end{pmatrix}
\]
as in Eq. (\ref{eq. U V}). Then as in Eq. (\ref{eq. F}), $ F=\langle U,V\rangle\leq \SL(2,\Z)$ is free of rank $2$.  We shall work inside
\[
\Z^2\rtimes F\leq P\leq \SL(3,\Z).
\]
Let
\[
e_1=\begin{pmatrix}1\\0\end{pmatrix},
\qquad
 e_2=\begin{pmatrix}0\\1\end{pmatrix}.
\]
Define affine lifts of $U$ and $V$ by
\[
\hat u=(e_2,U),
\qquad
\hat v=(e_1,V).
\]
In matrix form,
\[
\hat u=
\begin{pmatrix}
1&2&0\\
0&1&1\\
0&0&1
\end{pmatrix},
\qquad
\hat v=
\begin{pmatrix}
1&0&1\\
2&1&0\\
0&0&1
\end{pmatrix},
\]
are as in Eq. (\ref{eq. hatu hatv}).

Fix an integer $q\geq 2$, and put
\[
L_q=q\Z^2.
\]
Define
\begin{equation}\label{eq. Hq Kq}
\begin{array}{llcll}
H_q&=&\langle L_q,(0,U),(0,V)\rangle
&\leq& \Z^2\rtimes F,\\
K_q&=&\langle L_q, ~\hat u, ~\hat v\rangle
&\leq& \Z^2\rtimes F.
\end{array}
\end{equation}
Equivalently, in $\SL(3,\Z)$,
\[
H_q=
\left\langle
\begin{pmatrix}1&0&q\\0&1&0\\0&0&1\end{pmatrix},
\begin{pmatrix}1&0&0\\0&1&q\\0&0&1\end{pmatrix},
\begin{pmatrix}1&2&0\\0&1&0\\0&0&1\end{pmatrix},
\begin{pmatrix}1&0&0\\2&1&0\\0&0&1\end{pmatrix}
\right\rangle,
\]
and
\[
K_q=
\left\langle
\begin{pmatrix}1&0&q\\0&1&0\\0&0&1\end{pmatrix},
\begin{pmatrix}1&0&0\\0&1&q\\0&0&1\end{pmatrix},
\begin{pmatrix}1&2&0\\0&1&1\\0&0&1\end{pmatrix},
\begin{pmatrix}1&0&1\\2&1&0\\0&0&1\end{pmatrix}
\right\rangle.
\]

\begin{lem}
For every $q\geq 2$, we have $\rank(H_q)=\rank(K_q)=4.$
\end{lem}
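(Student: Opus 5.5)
Both groups are given by four explicit generators, so $\rank(H_q)\le 4$ and $\rank(K_q)\le 4$. The whole task is the lower bound, which I will get by exhibiting a surjection from each group onto a group of rank $4$.

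First I identify the structure of both groups. In $\Z^2\rtimes F$, conjugating a translation $(w,I)$ by $(c,A)$ gives $(Aw,I)$. Since $F\le \SL(2,\Z)$ preserves $q\Z^2$, the normal closure of $L_q$ in $H_q$ and in $K_q$ is just $L_q$. Hence $H_q=L_q\rtimes F$, via $(x,A)\mapsto$ the translation $x$ times the linear part.

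For $K_q$, the subgroup $L_q$ is normal, and the quotient $K_q/L_q$ maps isomorphically to $F$ under the linear-part projection. The reason is as follows. An element of $K_q$ with trivial linear part is a translation by some vector $x$. Any element of $K_q$ can be written as $\ell\cdot k$ with $\ell\in L_q$ and $k\in K=\langle\hat u,\hat v\rangle$. If the linear part is trivial, then the linear part of $k$ is trivial, so $k=1$ because $K\cong F$. Thus the kernel of the projection $K_q\to F$ is exactly $L_q$, and $K_q\cong L_q\rtimes F$ as well.

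Next I build the rank-$4$ quotient. Reduce everything modulo $2$. The matrices $U$ and $V$ are congruent to $I$ modulo $2$, so $F$ acts trivially on $L_q/2L_q\cong(\Z/2)^2$. This gives a homomorphism
\[
H_q=L_q\rtimes F\longrightarrow (L_q/2L_q)\times F^{ab}/2F^{ab}\cong(\Z/2)^4 .
\]
Well-definedness is the one thing to check. The semidirect relations $A\ell A^{-1}=A\ell$ become trivial in the target, since $A\ell\equiv \ell \pmod{2L_q}$ when $A\equiv I \pmod 2$ and $\ell\in q\Z^2$. Surjectivity is clear on the generators. Since $(\Z/2)^4$ needs four generators, $\rank(H_q)\ge 4$.

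For $K_q$ I reuse the same target. The map sends $\ell\in L_q$ to $(\ell \bmod 2L_q,0)$ and sends $\hat u,\hat v$ to $(0,\bar U)$ and $(0,\bar V)$. The cleanest route is to factor through the abstract isomorphism $K_q\cong L_q\rtimes F$ established above, writing elements as $(\ell,k)$ with $k\in K$. Here $\hat u$ acts on $L_q$ by conjugation via $U$, which is trivial modulo $2L_q$. So the same verification applies and yields a surjection $K_q\to(\Z/2)^4$, giving $\rank(K_q)\ge 4$.

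The main obstacle is this second step: confirming that $K_q$ genuinely splits as $L_q\rtimes K$, i.e.\ that $K\cap L_q=1$, so the translation parts of $\hat u,\hat v$ do not spoil the homomorphism. The argument is exactly the injectivity of the linear-part projection on $K$ noted in Section 3. A single-matrix alternative to the whole mod-$2$ argument would be abelianization: $H_q^{ab}$ surjects onto $(\Z/2)^2\times\Z^2$, which has rank $4$. I would present the mod-$2$ version since it treats both groups uniformly.
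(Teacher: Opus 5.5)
Your proof is correct and follows essentially the paper's route. Both arguments identify $H_q$ and $K_q$ with $L_q\rtimes F$ and detect rank $4$ through an abelian quotient. The paper computes the full abelianization $\Z^2\oplus(\Z/2\Z)^2$, and your $(\Z/2\Z)^4$ is just its reduction mod $2$. Your check that $L_q\cap K=1$, giving $K_q\cong L_q\rtimes K$, is more explicit than the paper's brief justification of that splitting.
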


\begin{proof}
Both groups are generated by the four displayed generators, so their ranks are
at most $4$. Furthermore, we have
\[
H_q\cong L_q\rtimes F,
\]
where $F=\langle U,V\rangle\cong F_2$ acts on $L_q=q\Z^2$ by the standard linear
action.  The abelianization of this semidirect product is
\[
(H_q)_{ab}\cong
(F)_{ab}\oplus
\frac{L_q}{(U-I)L_q+(V-I)L_q}.
\]
Now
\[
(U-I)L_q=2q\Z e_1,
\qquad
(V-I)L_q=2q\Z e_2.
\]
Hence
\[
\frac{L_q}{(U-I)L_q+(V-I)L_q}
\cong
(\Z/2\Z)^2.
\]
Since $F_{ab}\cong \Z^2$, we get
\[
(H_q)_{ab}\cong \Z^2\oplus (\Z/2\Z)^2.
\]
This abelian group requires four generators.  Therefore $H_q$ itself requires
at least four generators, so $\rank(H_q)=4$.

The same argument applies to $K_q$.  Indeed, because $F$ is free on $U,V$, the
assignment
\[
U\mapsto \hat u,
\qquad
V\mapsto \hat v
\]
defines a free subgroup of $\Z^2\rtimes F$ projecting isomorphically onto $F$.
Thus
\[
K_q\cong L_q\rtimes F
\]
with the same linear action.  Consequently
\[
(K_q)_{ab}\cong \Z^2\oplus (\Z/2\Z)^2,
\]
and $\rank(K_q)=4$.
\end{proof}

\subsection{Description of the intersection}

Let $c:F\longrightarrow \Z^2$
be the $1$-cocycle determined by
\[
c(U)=e_2,
\qquad
c(V)=e_1.
\]
Explicitly, we have
\[
K_q=\{(z+c(w),w)\mid z\in L_q,\\ w\in F\}.
\]
On the other hand,
\[
H_q=\{(z,w)\mid z\in L_q,
\ w\in F\}.
\]
Therefore
\[
H_q\cap K_q=\{(z,w)\mid z\in L_q, w\in F,
\ c(w)\in L_q\},
\]
whose image in $F$ under the epimorphism $\phi: \Z^2\rtimes F \rightarrow F$ is 
\[
N_q=\{w\in F\mid c(w)\in L_q\}.
\]

Note that $\hat u$ and
$\hat v$ act on $(\Z/q\Z)^2$ (as a modulo $q$ quotient of $\Z^2$) by the affine transformations
\[
\bar\alpha(x,y)=(x+2y,y+1),
\]
and
\[
\bar\beta(x,y)=(x+1,2x+y).
\]
Then $N_q$ is the stabilizer of $(0,0)$ for this action.  Since
$(\Z/q\Z)^2$ is finite, $N_q$ has finite index in $F$.  By the
Nielsen--Schreier theorem, $N_q$ is a finitely generated free group; see, for
example, \cite[Ch.~I, Sec.~3]{LyndonSchupp1977}.  Hence $H_q\cap K_q$ is
finitely generated.

\subsection{The ranks of the intersections are unbounded}
Take $P_n=(n,1-n)\in \Z^2$ as in Eq. (\ref{eq. Pn}). Then by Lemma \ref{lem:orbit-infinitely-many}, all the points $P_n$ are contained in the orbit of $(0,0)$.
Reducing modulo $q$, the points
\[
P_0,P_1,\ldots,P_{q-1}
\]
are distinct in $(\Z/q\Z)^2$, because their first coordinates are distinct
modulo $q$.  Therefore the orbit of $(0,0)$ in $(\Z/q\Z)^2$ has at least $q$
points.  Since $N_q$ is the stabilizer of $(0,0)$, the orbit-stabilizer formula
for the action of $F$ gives
\[
[F:N_q]\geq q.
\]
Because $F\cong F_2$, the Nielsen--Schreier formula gives
\[
\rank(N_q)=[F:N_q](\rank(F)-1)+1=[F:N_q]+1\geq q+1.
\]
Finally, since the projection
\[
\phi: H_q\cap K_q \longrightarrow N_q
\]
is onto, one has
\[
\rank(H_q\cap K_q)\geq \rank(N_q)\geq q+1.
\]
The groups $H_q\cap K_q$ are finitely generated, but their ranks are
unbounded. Therefore, Theorem \ref{SL3n not satisfy strong Howson-type uniform rank} holds. 

In fact, from Eq. (\ref{eq. Hq Kq}), we have proved the following result.

\begin{thm}
For any integer $q\geq 2$, there exist finitely generated subgroups $H_q,K_q$ of $\Z^2\rtimes F_2$ such
that
\[
\rank(H_q)=\rank(K_q)=4,
\]
while $H_q\cap K_q$ is finitely generated and
\[
\rank(H_q\cap K_q)\geq q+1.
\]
In particular, $\Z^2\rtimes F_2$ does not satisfy any strong Howson-type uniform rank
bound, even when restricted to those pairs whose intersections are finitely
generated.
\end{thm}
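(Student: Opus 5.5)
The plan is to observe that the entire construction of Section 4 already takes place inside the subgroup $\Z^2\rtimes F\leq P$, where $F=\langle U,V\rangle\cong F_2$ acts linearly on $\Z^2$. So the theorem follows by checking that each step used only this subgroup, and never any element of $\SL(2,\Z)\setminus F$. Concretely, I identify $\Z^2\rtimes F_2$ with $\Z^2\rtimes F$ via Sanov's theorem. I then take $H_q,K_q$ as in Eq.~(\ref{eq. Hq Kq}); their generators $(qe_1,I)$, $(qe_2,I)$, $(0,U)$, $(0,V)$, $\hat u=(e_2,U)$ and $\hat v=(e_1,V)$ all lie in $\Z^2\rtimes F$.

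Next I re-read the rank computation. The lemma giving $\rank(H_q)=\rank(K_q)=4$ is intrinsic to the groups $H_q\cong K_q\cong L_q\rtimes F$. The upper bound comes from the four listed generators. The lower bound comes from the abelianization $(H_q)_{ab}\cong \Z^2\oplus(\Z/2\Z)^2$, which needs four generators. Neither bound refers to the ambient group, so nothing changes.

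Then I handle the intersection. The description $H_q\cap K_q=\{(z,w)\mid z\in L_q,\ c(w)\in L_q\}$ is a computation inside $\Z^2\rtimes F$ using the cocycle $c$. Its image $N_q$ in $F$ is the stabilizer of $(0,0)$ under the affine action on $(\Z/q\Z)^2$, so $N_q$ has finite index in $F$.

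\textbf{Main point to check.} To show $H_q\cap K_q$ is finitely generated (and not just its image), I use the kernel of $\phi$ restricted to the intersection, which is $L_q\cong\Z^2$. This gives an extension $1\to L_q\to H_q\cap K_q\to N_q\to 1$ with both ends finitely generated, so $H_q\cap K_q$ is finitely generated. Note that the kernel is $\{(z,1): z\in L_q\}$, because $c(1)=0\in L_q$.

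Finally, the bound $\rank(H_q\cap K_q)\geq \rank(N_q)=[F:N_q]+1\geq q+1$ follows as in the preceding subsection. It uses Lemma~\ref{lem:orbit-infinitely-many} to see that the orbit of $(0,0)$ mod $q$ has at least $q$ points, and the Nielsen--Schreier formula. The "in particular" clause is immediate: for $m=n=4$, no constant $C(4,4)$ can exceed $q+1$ for every $q$.
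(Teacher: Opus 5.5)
Your proposal is correct and is essentially the paper's argument. The paper also obtains this theorem by noting that the construction of $H_q,K_q$ and all the computations of Section 4 take place inside $\Z^2\rtimes F$ with $F=\langle U,V\rangle\cong F_2$. Your explicit use of the extension $1\to L_q\to H_q\cap K_q\to N_q\to 1$ spells out a step the paper leaves implicit, namely passing from finite generation of $N_q$ to finite generation of $H_q\cap K_q$.
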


\subsection*{Acknowledgments and AI disclosure}
ChatGPT was used to generate the potential examples that appear in this paper. The final proofs in this paper were reviewed and corrected by the human authors.

\end{document}